\newtheorem{theorem}{Theorem}[section]
\newtheorem{lemma}[theorem]{Lemma}
\newtheorem{remark}[theorem]{Remark}
\numberwithin{equation}{section}
\def\zz{\mathbb{Z}}
\def\rr{\mathbb{R}}
\def\bn{\mathbf{n}}
\def\rr{\mathbb{R}}
\def\what{\widehat}
\def\eps{\epsilon}
\def\mt{\mathbb T}
\def\bvarphi{\boldsymbol{\varphi}}
\def\bu{\mathbf{u} }
\def\bv{\mathbf{v} }
\def\hat{\what}
\begin{document}

\title{On the numerical approximations of the periodic Schr\"odinger
equation
}

\author{Liviu I. Ignat}

\address{L. I. Ignat
\hfill\break\indent Institute of Mathematics ``Simion Stoilow'' of the Romanian Academy\\
\hfill\break\indent  21 Calea Grivitei Street \\010702 Bucharest \\ Romania 
}
 
 \email{{\tt
liviu.ignat@gmail.com}\hfill\break\indent  {\it Web page: }{\tt
http://www.imar.ro/\~\,lignat}}

\keywords{Scr\"odinger equations, numerical approximation schemes, Strichartz estimates,
nonharmonic analysis, Ingham inequalities }

\subjclass{65M12, 65N06, 35Q55, 42C99 }


\abstract{We consider  semidiscrete finite differences schemes for the periodic
Scr\"odinger equation in dimension one. We  analyze whether the space-time integrability
properties  observed by Bourgain in the continuous case are satisfied at the numerical
level uniformly with respect to the mesh size. For the simplest finite differences scheme
we show that, as mesh size tends to zero, the blow-up in the
 $L^4$ time-space norm occurs, a phenomenon due to the  presence of numerical spurious high
frequencies.

To recover the uniformity of this property we introduce two methods: a spectral filtering
of initial data and a viscous scheme. For both of them we prove a $L^4$ time-space
estimate, uniform with respect to the mesh size.}

\vspace{1cm}

\textbf{Warning 2019}: This paper was submitted to M2AN in 2007 and it was assigned the number 
2007-29. It passed a first review round ( three reviews :-) )  without decision   ("It is only after the consideration of a thoroughly revised
version of your manuscript and a new iteration with all the referees that  I will be in position to make my final decision.")
 
After completing the PhD, I tried to publish the papers resulting from the thesis and I didn't spent too much time on other related problems (the periodic case for example). It was only recently that I discovered some interest in the subject  from other authors and I decided to upload it on arxiv.org. 

Use with caution as there is no revision of the text in the last twelve years.

\endabstract
 \maketitle

\section{Introduction}

Let us consider the linear Schr\"odinger equation(LSE):
 \begin{equation}\label{linear.sch}
 iu_t+\Delta u=0.
\end{equation}
When this equation is considered on the whole space, in addition to
the conservation of energy:
\[\|u(t)\|_{L^2(\rr^d)}=\|u(0)\|_{L^2(\rr^d)}\] we have a dispersive
property:
 \begin{equation}\label{linfty}
\|u(t)\|_{L^\infty(\rr^d)}\leq \frac {c(d)}{
|t|^{d/2}}\|u(0)\|_{L^1(\rr^d)},\, t\neq 0.
\end{equation}
These properties have been employed to develop well-posedness results for inhomogeneous
and nonlinear Schr\"odinger equations (NSE) \cite{0372.35001, MR801582, 0638.35021}. The
main idea of these works  is to obtain space-time estimates for the solutions of LSE,
called Strichartz estimates \cite{0372.35001}:
\begin{equation}\label{strichartz}
\|u\|_{L^q(\rr\times\rr^d)}\leq c(d,q) \|u(0)\|_{L^2(\rr^d)}, \,
q=\frac{2(d+2)}d.
\end{equation}

The local existence theory for NSE in $\rr^d$ uses the dispersive properties of the free
Schr\"odinger semigroup, in the form of the Strichartz estimates. When the domain is
periodic, i.e. when the equation \eqref{linear.sch} is considered on the torus
$\mt^d=\rr^d/2\pi\zz^d$, inequality \eqref{strichartz} does not hold. Using refined
properties of trigonometric series, Bourgain \cite{MR1209299} has obtained an analogue of
these estimates, by defining the temporal norm on a finite time interval and dealing with
the projection of the linear Schr\"odinger equation on spaces spanned by a finite number
of Fourier modes. In dimension one, this leads to estimates similar to classical
Strichartz estimates in the case of the Cauchy problem, except that they involves
constantes that increase with the number of the considered Fourier modes:
$$\Big\|\sum _{|k|\leq N}a_k e^{-4it\pi^2 k^2}e^{2i\pi k  x}\Big\|_{L^6(0,1/2\pi;\,\mt^1)}\leq c N^\eps
\Big (\sum _{|k|\leq N}|a_k|^2\Big)^{1/2},$$ for some positive
constant $\eps$.
 However, for $2\leq q\leq 4$ an estimate similar
to \eqref{strichartz} holds:
\begin{equation*}\label{str.tor}
\|u\|_{L^q(0,1/2\pi;\,\mt^1)}\leq c(q) \|u(0)\|_{L^2(\mt^1)},
\end{equation*}
 where the above constant does not depends by the number
of Fourier modes of initial data.
 The case $q=4$, is a simple consequence of the orthogonality of the  complex  exponentials
 and goes back to Zygmund
\cite{MR0387950}:
\begin{equation}\label{l4}
\Big\|\sum _{|k|\leq N}a_ke^{-4it\pi^2 k^2}e^{2i\pi k
x}\Big\|_{L^4(0,1/2\pi;\,\mt^1)}\leq c \Big (\sum _{|k|\leq N}|a_k|^2\Big)^{1/2}.
\end{equation}

 Using those estimates, Bourgain \cite{MR1209299} has been proved that NSE
   on the one-dimensional torus
$iu_t+\Delta u=|u|^\sigma u$ is locally well-posed in $H^s(\mt^1)$, provided that
$\sigma<2/(1-2s)$. If $\sigma<1$, it is globally posed in the space $L^4(\rr_{loc}\times
\mt^1)$ for initial data in $L^2(\mt^1)$. If $\sigma=1$, the solution is in
$C(\rr,H^s(\mt^1))$ for all $u(0)\in H^s(\mt)$, $s\geq 0$.

The extension of \eqref{strichartz} for the Schr\"odinger flow  to  riemannian compact
manifolds has been recently studied in \cite{MR2058384}, where the authors have been
obtained Strichartz estimates with loose of derivatives. More precisely, for any $p\geq
2$ and $q<\infty$ satisfying the admissibility condition
$$\frac 2p+\frac dq=\frac d2$$
the solutions of the Schr\"odinger equation on the $d$-dimensional
torus $\mt ^d$ satisfy
\begin{equation}\label{stric-burq}
\|u\|_{L^p(I,\,L^q(\mt^d))}\leq C(I)\|u(0)\|_{H^{1/p}(\mt^d)}.
\end{equation}

The aim of this paper is to analyze whether the property \eqref{l4} holds for
approximations of the one-dimensional LSE with periodic boundary conditions. We will
consider two numerical schemes for the linear Schr\"odinger equation on the
one-dimensional torus and analyze whether the space-time estimates of the type \eqref{l4}
hold at the discrete level, uniformly with respect to the mesh size parameter. The
analysis of property (\ref{stric-burq}) at the discrete level will be done in a further
paper. The existence of uniform estimates in mixed norms for the discrete solutions will
permit to introduce convergent numerical approximations for those NSE for which even in
the continuous case the well posedness could not be established by energy methods, see
\cite{MR1691575} and the references therein. To determine all the pairs $(q,r)$ for which
the discrete solutions remain uniformly bounded in the $L^qL^r$ norm when initial data is
bounded in the $L^2$-norm as the mesh size goes to zero, is a difficult open problem.

In the case of the LSE on the whole line, the analysis of the Strichartz  estimates for
its numerical approximations has been done in \cite{1063.35016, liv3} and
\cite{liv-siam}. The authors have proven the lack of uniform dispersive properties of
type \eqref{linfty} or \eqref{strichartz} for the solutions of the simplest approximation
of the linear Schr\"odinger equation:
\begin{equation}\label{first.diss}
i\bu_t+\Delta_h\bu=0,
\end{equation}
where uniformity is with respect to the mesh size. Above, $\Delta_h$
is the second order approximation by finite differences of the
Laplace operator $\Delta$:
\[(\Delta_h \bu)_{j}=\frac{ u_{j+1}-2u_j+u_{j-1}}{h^2},\, j\in \zz.\]

The lack of a uniform estimate of type \eqref{linfty}  is due to the
fact that the symbol $p_h(\xi)=4/h^2\sin(\xi h/2)$ of the operator
$-\Delta_h$ changes the convexity at the points $\pm\pi/2h$, a
property that the continuous one $\xi^2$, does not satisfy.
Observing this pathology, in \cite{liv3}  the following estimate for
the solutions of
 scheme \eqref{first.diss} is proved:
\[\|\bu(t)\|_{l^\infty(h\zz^d)}\leq c(d) \Big(\frac 1{|t|^{1/2}}+
\frac 1{|th|^{1/3}}\Big)\|\bu(0)\|_{l^1(h\zz^d)},\] estimate that is
not uniform on the mesh parameter $h$. This does not allow to prove
uniform Strichartz-like estimates for the  semi-discretization
\eqref{first.diss}.

We mention that the  Schr\"odinger equation on the lattice $h\zz^d$,
without concern for the uniformity of the estimates with respect to
the size of the lattice, has been also studied in \cite{MR2150357}.
The analysis of dispersive properties for fully discrete models has
been done in \cite{MR2000069} for the KdV equation and in
\cite{liv-m3as} for the Schr\"odinger equation.

In the case of the approximations of the periodic LSE, we will prove in Section
\ref{sec.cons} that the same pathology occurs. The symbol introduced by the simplest
finite difference approximation changes the convexity and thus the $L^4$-estimate of the
numerical solution does not hold uniformly with respect to the mesh size.

The paper is organized as follows. In Section \ref{sec.cons} we analyze the simplest
finite difference scheme for the one-dimensional LSE on the torus $\mt^1$. We  prove the
blow-up of the discrete $L^4(0,T;L^4(\mt^1))$-norm of its solutions as mesh parameter
goes to zero. In Section \ref{filter} by mean of Ingham's inequality we show that a
suitable Fourier filtering of the initial data allows us to prove an uniform
$L^4$-estimate on the numerical solutions. Section \ref{vis} is devoted to a scheme that
contains artificial numerical viscosity. In this case the $L^4$-estimate follows by using
the dissipative effect for the high frequency component of the solutions and Ingham's
inequality for the
 low ones.

\section{The conservative scheme}\label{sec.cons}

In this section we will  analyze the semidiscrete scheme in finite differences for the
one-dimensional LSE .
 Let us choose $N$ an even positive integer and
$h=1/(N+1)$. With the convention $u^h_{-1}=u^h_{N}$  we consider the
following numerical scheme:
\begin{equation}\label{conservative}
 \left\{
\begin{array}{ll}
\displaystyle i\frac {d u^h_j}{dt}+\frac{u^h_{j+1}-u^h_j+u^h_{j-1}}{h^2}= 0, &t>0,\ j=0,\dots,N, \\
\\
u^h_0(t)=u^h_{N+1}(t), & t> 0,
  \end{array}
  \right.
\end{equation}
where $\bu^h(0)$ is an approximation of the initial datum $u(0)$. Here $\bu^h$ stands for
the finite unknown vector $\{u_j^h\}_{j=0}^N$, $u_j^h(t)$ being the approximation of the
continuous solution at the node $x_j=j h$ and time $t$.

This scheme satisfies the classical properties of consistency and
stability which imply $L^2$-convergence. We will construct
 explicit solutions for scheme (\ref{conservative}) for
which an $L^4$-estimate similar to  \eqref{l4} does not hold uniformly with respect to
the mesh-size parameter $h$.

In what follows for any $1\leq p<\infty$, we  consider the spaces of
sequences $\bu=(u_0,\dots,u_N)$, $L^p(\mt_h)$,  endowed with the
norm
$$\|\bu\|_{L^p(\mt_h)}^p=h\sum _{j=0}^N |u_j|^p.$$


The main ingredient in our analysis is the \textit{discrete Fourier transform} (DFT) (for
refined results on this transform we refer to \cite{MR1776072}, Ch.~3), which associate
to any vector $\bv=(v_0,v_1,\dots,v_N)$ its Fourier coefficients:
 \begin{equation*}\label{fourier}
 \hat v(k)=h\sum _{j=0}^{N} v_je^{-2i\pi k jh}, \, k=-\frac
N2,\dots, \frac N2.
\end{equation*}
 Also we can recover $\textbf{v}$ from
$\hat v$ by the \textit{inverse discrete Fourier transform} as
follows:
$$v_j=\sum_{k=-N/2}^{N/2} \hat v(k)e^{2i\pi kjh}.$$
In the sequel  we will denote by $\{\bvarphi^h_k\}_{k=-N/2}^{N/2}$, $-N/2\leq k\leq N/2$,
the following vectors
$\bvarphi^h_k=(\varphi^h_{kj})_{j=0}^N$ where $\varphi^h_{kj}=\exp(2ik\pi j h)$. 
These vectors corresponds to the projection of the functions
$\exp(2i\pi kx)$ on the grid $\{jh: j=0,\dots,N\}$.

\begin{figure}
  \includegraphics[width=10cm]{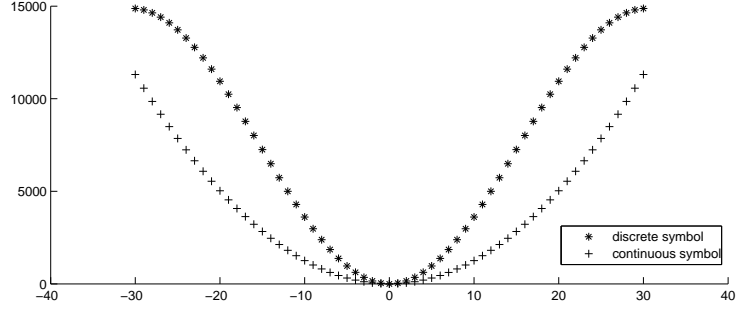}\\
  \caption{The discrete and continous symbols, $h=1/60$}\label{fig1}
\end{figure}

Let  $\{\hat{u}_0(n)\}_{n=-N/2}^{N/2},$  be the Fourier coefficients of $\bu^h(0)$. Thus
$\bu^h(0)$  writes as follows:
    $$\bu^h(0)=\sum _{n=-N/2}^{N/2}\hat{u}_0(n) \bvarphi^h_n.$$
Introducing this representation in equation \eqref{conservative} we find that the
coefficients $\{\hat{u}(t,n)\}$, $-N/2\leq n\leq N$, of $\bu^h(t)$ satisfy the following
ODE's:
$$\left\{
\begin{array}{ll}
 \displaystyle i\frac{d\hat{u}(n,t)}{dt}+\hat{u}(n,t)
 \frac{\exp(2i\pi nh)+\exp(-2i\pi nh)-2}{h^2}=0, & t>0, \\
  \\
  \hat u(n,0) =\hat{u}_0(n).&
\end{array}
\right.$$
   Solving these ODE's we
    obtain that the solution $\bu^h$ of \eqref{conservative} is given by
\begin{equation}\label{psch7}
    \bu^h(t)=\sum _{n=-N/2}^{N/2}\hat{u}_0(n)\exp\Big(-\frac{4it\sin^2(n\pi h)}{h^2}\Big)\bvarphi^h_n
\end{equation}

Observe that the in contrast with the continuous case, where the
solutions are given by the following expression
$$u(t,x)=\sum _{n\in \zz}\hat u_0(n) \exp(-4i\pi^2 n^2 t)\exp(2i\pi nx),$$
here the symbol $n^2$, $n\in \zz$, has been  replaced by
\begin{equation}\label{symbol}
p_h(n)= \frac{4\sin^2\big (n\pi h\big)}{h^2}= 4{(N+1)^2}\sin^2\Big(\frac{n\pi}{N+1}
\Big), \, n=-\frac N2,\dots,\frac N2,
\end{equation}
 and the vectors $\bvarphi_n^h$ are the projections of the
exponentials $\exp(2i\pi kx)$ on the considered grid.

As we can see in Figure \ref{fig1} the discrete symbol $p_h(n)$ introduced by  scheme
\eqref{conservative} changes convexity at the points $n=\pm (N +1)/4$. We recall that, in
the case of Cauchy problem for LSE, this pathology of the discrete symbol has been used
in \cite{1063.35016} to prove the lack of uniform Strichartz estimates for the classical
numerical approximation \eqref{first.diss}. It is then natural to expect that also in the
periodic case the space time integrability of the solutions will  be loosed in its
simplest approximation scheme.

The following theorem shows the existence of initial data in $L^2$ for which the mixed
$L^4$-norm of the approximate solutions blows-up as the mesh size tends to zero.

\begin{theorem}
\label{explosion} Let be $T>0$. Then the following holds:
\begin{equation}\label{exp.1}
    \sup _{h>0}\sup _{\bu^h(0)\in L^2(\mt_h)} \frac {\|\bu^h\|_{L^4(0,T;L^4( \mt_h))}}
    {\|\bu^h(0) \|_{L^2(\mt_h)}}=\infty,
\end{equation}
where  $\bu^h$ is the solution of equation \eqref{conservative} with
$\bu^h(0)$ as initial datum.
\end{theorem}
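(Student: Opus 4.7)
The plan is to construct, for each small mesh-size $h$, explicit initial data $\bu^h(0)$ witnessing the blow-up in \eqref{exp.1}. The guiding principle, inherited from the analysis on $\RR$ in \cite{1063.35016}, is to concentrate the Fourier support of $\bu^h(0)$ around the inflection point of the symbol \eqref{symbol}: from $p_h''(n)=8\pi^2\cos(2n\pi h)$ one sees that this inflection point is close to $(N+1)/4$. Let $n_0$ be the nearest integer to $(N+1)/4$, so that $p_h'(n_0)=4\pi/h+O(1)$, $p_h''(n_0)=O(h)$ and $|p_h'''(\xi)|\leq Ch$ for all $\xi$. Fix a parameter $M=M(h)$ with $M\to\infty$ and $hM\to 0$ as $h\to 0$, and take
$$\bu^h(0) \;=\; \sum_{|k|\leq M}\bvarphi^h_{n_0+k},$$
so that discrete Parseval gives $\|\bu^h(0)\|_{L^2(\mt_h)}^2=2M+1$.

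Inserting this into \eqref{psch7} and Taylor expanding, I write $p_h(n_0+k)=p_h(n_0)+p_h'(n_0)k+r_h(k)$ with $|r_h(k)|\leq Ch|k|^3$. Dropping the remainder yields the traveling-wave approximation
$$v_j(t)\;=\;e^{-itp_h(n_0)}\,e^{2i\pi n_0 x_j}\,D_M\!\L(x_j-\tfrac{2t}{h}\R),\qquad D_M(y)=\sum_{|k|\leq M}e^{2i\pi k y}.$$
Since $D_M$ is $1$-periodic and the classical computation gives $\int_0^1|D_M|^4\,dy\sim M^3$, a Riemann-sum argument (with bounded-variation error $O(hM^5)$, negligible under $hM\to 0$) yields
$$\|\bv(t)\|_{L^4(\mt_h)}^4 \;=\; h\sum_{j=0}^{N}\bigl|D_M(x_j-2t/h)\bigr|^4 \;\sim\; M^3$$
uniformly in $t$, whence $\|\bv\|_{L^4(0,T;L^4(\mt_h))}^4\sim TM^3$.

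For the remainder, discrete Parseval gives $\|\bu^h(t)-\bv(t)\|_{L^2(\mt_h)}^2\leq C(th)^2 M^7$, while a termwise triangle inequality gives $\|\bu^h(t)-\bv(t)\|_{L^\infty(\mt_h)}\leq CthM^4$; interpolation between $L^2$ and $L^\infty$ then produces $\|\bu^h(t)-\bv(t)\|_{L^4(\mt_h)}\leq CthM^{15/4}$, hence
$$\|\bu^h-\bv\|_{L^4(0,T;L^4(\mt_h))}^4 \;\leq\; CT^5h^4M^{15} \;=\; C(ThM^3)^4\cdot TM^3.$$
Choosing $M$ so that additionally $ThM^3\to 0$ (for instance $M\sim h^{-1/4}$) makes the error negligible compared to the main term, and
$$\frac{\|\bu^h\|_{L^4(0,T;L^4(\mt_h))}}{\|\bu^h(0)\|_{L^2(\mt_h)}}\;\gtrsim\; \L(\frac{TM^3}{M^2}\R)^{\!1/4} \;=\; (TM)^{1/4}\;\longrightarrow\;\infty,$$
which proves \eqref{exp.1}.

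The main obstacle is keeping the traveling-wave picture valid on the whole window $[0,T]$: this amounts to controlling the cubic phase correction $tr_h(k)\sim thk^3$, which forces the joint constraints $hM\to 0$ (so the Riemann-sum step is accurate) and $ThM^3\to 0$ (so the remainder is dominated by the main term). Both are compatible and dictate the admissible growth of $M$. The slight non-integrality of $(N+1)/4$ in the periodic setting, absent from the cleaner case on $\RR$, is harmless: taking $n_0$ to be the nearest integer perturbs $p_h''(n_0)$ only by $O(h)$, a correction absorbed into the same $O(h|k|^3)$ remainder.
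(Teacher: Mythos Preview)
Your argument is correct and leads to the same conclusion as the paper, with the same family of initial data and the same constraint: your condition $ThM^3\to 0$ is exactly the paper's $\alpha<1/3$ once one writes $M=N^\alpha$. The route, however, is different. The paper computes $\|\bu^h\|_{L^4}^4$ directly through the identity \eqref{psch500}, obtaining $\sum_{n_1+n_2=n_3+n_4}\int_0^T\cos(tq_h(\bn))\,dt$, and then shows by an explicit trigonometric factorization that $|q_h(\bn)|\lesssim N^{3\alpha-1}$ on $\Lambda_N^4$, so each cosine integral is $\gtrsim T$; a count of admissible quadruples ($\sim N^{3\alpha}$) finishes. You instead linearize the dispersion relation about $n_0$ and compare $\bu^h(t)$ to a rigidly translated Dirichlet kernel, invoking the classical $\int_0^1|D_M|^4\sim M^3$ and controlling the cubic phase remainder by interpolation. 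Your approach makes the wave-packet picture explicit and is closer in spirit to the real-line analysis of \cite{1063.35016,liv3}; the paper's approach is purely arithmetic on the Fourier side and avoids any comparison argument.

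Two minor remarks. First, strictly speaking, dropping the remainder produces $D_M\bigl(x_j - t\,p_h'(n_0)/(2\pi)\bigr)$ rather than $D_M(x_j-2t/h)$; since only $|D_M|$ enters the $L^4$ computation and $D_M$ is $1$-periodic, this does not affect anything. Second, your Riemann-sum step is in fact an \emph{exact} identity once $4M<N+1$: the function $|D_M|^4$ is a trigonometric polynomial of degree $4M$, so there is no aliasing on the uniform grid and $h\sum_j|D_M(x_j-ct)|^4=\int_0^1|D_M|^4\,dy$ for every $t$. The bounded-variation error you mention is therefore unnecessary.
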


\begin{remark}
Let $\mathbf{P}^h$ be the piecewise constant interpolator. In view
of Theorem \ref{explosion}, for any
    fixed $T>0$, the uniform boundedness principle guarantees the
existence of a function $u(0)\in L^2(\mathbb{T^1})$ and a sequence $\bu^h(0)$ such that
$\mathbf{P}^h\bu^h(0)\rightarrow u(0)$ in $L^2(\mathbb{T^1})$ such that the corresponding
solutions $\bu^h$ of (\ref{conservative}) satisfy
$$\|\mathbf{P}^h \bu^h \|_{L^4(0,T;L^4( \mt^1))}\rightarrow \infty.$$
\end{remark}

In what follows, to avoid the presence of constants, we will use the
notation $A\lesssim B$ to report the inequality $A\leq constant
\times B$, where the constant is independent of $h$. The statement
$A\simeq B$ is equivalent to $A\lesssim B$ and $B\lesssim A$.

\begin{proof}[Proof of Theorem \ref{explosion}]Let us fix $T>0$.
We will consider initial data with their spectrum concentrated at
the point $N/4$. A similar construction can be done for $-N/4$, this
point having the same pathology as the previous one.

Let us fix $\alpha\in (0,1)$. We set
 $$\Lambda _N=\Big\{n: |n|\leq \frac N2, \, \Big|n-\frac {N}4\Big|<N^\alpha\Big\}.$$
 To prove \eqref{exp.1} we choose as initial datum in
problem \eqref{conservative} the following function:
\begin{equation}\label{initial.data}
\bu^h(0)=\sum _{n\in \Lambda_N}\bvarphi^h_n.
\end{equation}
This function has all its Fourier coefficients identically one in $\Lambda_N$ and vanish
outside this set. A similar construction has been done in \cite{liv3} in the context of
the analysis of the dispersive properties of the classical finite difference
approximation of the Cauchy problem for the Schr\"odinger equation. By choosing initial
data concentrated at the pathological points the solutions of the scheme will be more and
more discrepant to its continuous counterpart.

\begin{figure}
  \includegraphics[width=8cm]{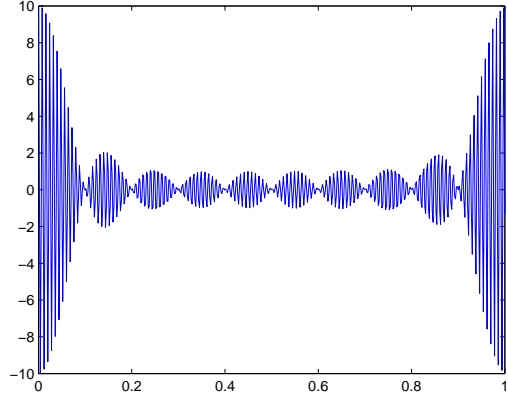}\\
  \caption{Example of initial data chosen  in \eqref{initial.data}, N=500, $\alpha=1/4$ }\label{fig2}
\end{figure}

Using the orthogonality of the vectors $\{\bvarphi^h_n\}_{n=-N/2}^{N/2}$ we obtain  the
following behaviour of the $L^2$-norm of the initial datum:
\begin{equation}\label{norma.initiala}
    \|\bu^h(0)\|_{L^2(\mt_h)} \simeq N^{\alpha/2}.
\end{equation}
We will prove that the $L^4(0,T;L^4(\mt_h))$-norm of
\begin{equation}\label{sol.part}
\bu^h(t)=\sum _{n\in \Lambda_N}\exp\Big(-\frac{4it\sin^2(n\pi h)}{h^2}\Big)\bvarphi^h_n,
\end{equation}
solution of equation \eqref{conservative} with initial datum given by
\eqref{initial.data},
 increases faster than
$N^{\alpha/2}$. To evaluate the $L^4(0,T;L^4(\mt_h))$-norm of $\bu^h$ we will  use that
for any vector $\bv$ given by $$\bv=\sum _{n=-N/2}^{N/2}\hat{a}(n)\bvarphi^h_n$$ its
$L^4(\mt_h)$-norm  satisfies
\begin{equation}\label{psch500}
    \|\bv\|^4_{L^4(\mt_h)}=\sum
    _{\begin{subarray}{c}
-N/2\leq n_i\leq N/2 \\
     n_1+n_2=n_3+n_4
     \end{subarray}
     }
     \hat{a}(n_1)\hat{a}({n_2})\overline{\hat{a}}(n_3)
     \overline{\hat{a}}(n_4)
\end{equation}
The proof of \eqref{psch500} uses only the orthogonality of the vectors $\bvarphi^h_n$
and we will omit it. Applying this result to the solution $\bu^h$ written in the form
\eqref{psch7} we get
$$\|\bu^h\|_{L^4(0,T;\mt_h)}^4=\int _0^T\sum _{\begin{subarray}{c}
n_i\in \Lambda_N \\
     n_1+n_2=n_3+n_4
     \end{subarray}
     } e^{-itq_h(\bn)}dt,$$
where the function $q_h(\bn)$ is given by
\begin{equation}\label{def.pgrande}
q_h(\bn)=p_h(n_1)+p_h(n_2)-p_h(n_3)-p_h(n_4),\, \bn=(n_1,n_2,n_3,n_4).
\end{equation}
  Using that the
function $q_h$ satisfies
$q_h(n_1,n_2,n_3,n_4)=-q_h(n_3,n_4,n_1,n_2)$ we have
\begin{align*}\label{psch13}
    \|\bu^h\|_{L^4(0,T;\,L^4(\mt_h))}^4&=\frac 12\sum _{
    \begin{subarray}{c}
    n_i\in \Lambda_N\\
    n_1+n_2=n_3+n_4
    \end{subarray}
    }
    \int _0^T
    (e^{-itq_h(n_1,n_2,n_3,n_4)}+e^{-itq_h(n_3,n_4,n_1,n_2)})dt\\
    &=\sum _{
    \begin{subarray}{c}
    n_i\in \Lambda_N\\
    n_1+n_2=n_3+n_4
    \end{subarray}
    }
    \int _0^T \cos(t q_h(\bn))dt
    \end{align*}
We will prove that for large enough $N$, the term $q_h(\bn)$
occurring in the right hand side is sufficiently small and then all
the integrals occurring in the last sum behaves as $T$.

Explicit computations show that for any $\bn=(n_1,n_2,n_3,n_4)$ with
$n_1+n_2=n_3+n_4$ the following holds:
\begin{align*}
 q_h(\bn)=8(N+1)^2\cos&\Big(\frac { (n_1+n_2)\pi}{N+1}\Big)
 \sin\Big(\frac {(n_1-n_2+n_3-n_4)\pi} {2(N+1)}\Big)
    \sin\Big(\frac { (n_1-n_2-n_3+n_4)\pi}{2 (N+1)}\Big).
\end{align*}

We estimate each  term  in the above representation of $q_h(\bn)$. Using that $n_1$ and
$n_2$ belong to $\Lambda_N$ we get
$$\left|\frac {(n_1+n_2)\pi}{N+1}-\frac\pi 2\right|\leq
\left|\frac { n_1\pi}{N+1}-\frac\pi 4\right|+\left|\frac {n_2\pi }{N+1}-\frac\pi 4\right|
\leq\frac {2\pi(N^\alpha+1/4)}{(N+1)}\lesssim \frac {1}{(N+1)^{1-\alpha}}.$$ Thus
\begin{equation}\label{ineg.1}
\left|\cos\Big(\frac { (n_1+n_2)\pi}{N+1}\Big)\right|\lesssim \sin \Big(\frac
1{(N+1)^{1-\alpha}}\Big) \lesssim \frac 1{(N+1)^{1-\alpha}}.
\end{equation}
Also using that $|n_1-n_2|+|n_3-n_4|\leq 4(N+1)^\alpha$ we obtain that
\begin{equation}\label{ineg.2}
    \left|\sin\Big(\frac {(n_1-n_2+n_3-n_4)\pi} {2(N+1)}\Big)
    \sin\Big(\frac { (n_1-n_2-n_3+n_4)\pi}{2
    (N+1)}\Big)\right|\lesssim \frac{1}{(N+1)^{2-2\alpha}}.
\end{equation}
In view of inequalities \eqref{ineg.1} and \eqref{ineg.2} we obtain that $q_h(\bn)$
satisfies  $|q_h(\bn)|\lesssim N^{3\alpha-1}$  for any $\bn=(n_1,n_2,n_3,n_4)\in
\Lambda_N^4$ with $n_1+n_2=n_3+n_4$. Choosing $\alpha <1/3$ the following estimate holds
uniformly
 for all $\bn$ as above:
$$\int _0^T \cos(t q_h(\bn))dt \geq T\cos\Big(\frac T {N^{1-3\alpha}}\Big)\gtrsim 1.$$

Using that the number of pairs $\bn\in \Lambda_N^4$ with $n_1+n_2=n_3+n_4$ is of order
$|\Lambda _N|^3$ we obtain that that
\begin{equation}\label{norma.finala} \|\bu^h\|_{L^4(0,T;\,L^4(\mt_h))} \simeq
N^{3\alpha/4}.
\end{equation}
By \eqref{norma.initiala} and \eqref{norma.finala} we finally obtain
that
$$\frac{\|\bu^h\|_{L^4(0,T;\,L^4(\mt_h))}}{\|\bu^h(0)\|_{l^2(\mt_h)}}\simeq
N^{\alpha/4}$$ which finishes the proof.
\end{proof}

%

\section{Filtering the high frequencies}\label{filter}

In this section we prove that a spectral filtering of the initial data will provides
uniform $L^4$-estimates for the discrete solutions. The following Theorem shows that this
holds for all initial  data supported far away from the pathological spectral points $\pm
N/4$.

\begin{theorem}\label{t.fil}
Let  $\lambda <1/4$. There is a positive constant $c(\lambda)$ such that for
 all initial
datum $\bu^h(0)\in {\rm{Span}}\{\varphi _n^h: |n|\leq \lambda N\}$, the solution $\bu ^h$
of equation (\ref{conservative}) satisfies
\begin{equation}\label{psch81}
    \|\bu^h\|_{L^4(0,T;\,L^4( \mathcal \mt_h))}\leq
    c(\lambda)\|\bu^h(0)\|_{L^2(\mt_h)}.
    \end{equation}
\end{theorem}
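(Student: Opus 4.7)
The plan is to reduce the $L^4$-estimate to an Ingham-type inequality for pairs of Fourier modes. First I would write
$$\|\bu^h\|_{L^4(0,T;\,L^4(\mt_h))}^4=\int_0^T\|(\bu^h(t))^2\|_{L^2(\mt_h)}^2\,dt,$$
and observe that, since every spectral mode of $\bu^h(0)$ satisfies $|n|\leq\lambda N$ with $\lambda<1/4$, the pointwise product $\bvarphi^h_m\cdot\bvarphi^h_n=\bvarphi^h_{m+n}$ never aliases on the grid (because $|m+n|\leq 2\lambda N<(N+1)/2$). The discrete Plancherel identity then gives
$$\|(\bu^h(t))^2\|_{L^2(\mt_h)}^2=\sum_{|k|\leq 2\lambda N}\Bigl|\sum_{\substack{m+n=k\\ |m|,|n|\leq\lambda N}}\hat u_0(m)\hat u_0(n)\,e^{-it(p_h(m)+p_h(n))}\Bigr|^2.$$

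Next, for each fixed $k$ I would apply the upper-bound form of Ingham's inequality in the time variable to the inner exponential sum, indexed by unordered pairs $\{m,n\}$ with $m+n=k$. Parametrizing such a pair by its smaller element $m\leq k/2$ and writing $\omega(m)=p_h(m)+p_h(k-m)$, the crucial analytic input is a uniform spectral gap $|\omega(m_2)-\omega(m_1)|\geq\gamma(\lambda)>0$ for distinct admissible pairs, independent of $h$. I would derive it from strict convexity of $p_h$ on the retained band: a direct computation gives
$$p_h''(\xi)=8\pi^2\cos\!\Bigl(\frac{2\pi\xi}{N+1}\Bigr)\geq 8\pi^2\cos(2\pi\lambda)>0\quad\text{for } |\xi|\leq\lambda N,$$
so $\omega''(m)=p_h''(m)+p_h''(k-m)$ is bounded below by a positive constant on the admissible range. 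Combined with a mean-value argument, this forces $|\omega(m_2)-\omega(m_1)|\gtrsim (m_2-m_1)(k-m_1-m_2)\geq 1$ whenever $m_1<m_2\leq k/2$ and both $m_i,k-m_i\in[-\lambda N,\lambda N]$.

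With this uniform gap, the upper form of Ingham's inequality supplies a constant $C(T,\lambda)$, independent of $h$, with
$$\int_0^T\Bigl|\sum_{m+n=k}\hat u_0(m)\hat u_0(n)e^{-it(p_h(m)+p_h(n))}\Bigr|^2dt\leq C(T,\lambda)\sum_{m+n=k}|\hat u_0(m)|^2|\hat u_0(n)|^2.$$
Summing over $k$ and using
$$\sum_k\sum_{m+n=k}|\hat u_0(m)|^2|\hat u_0(n)|^2=\Bigl(\sum_n|\hat u_0(n)|^2\Bigr)^{\!2}=\|\bu^h(0)\|_{L^2(\mt_h)}^4,$$
followed by a fourth root, yields \eqref{psch81}. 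The main obstacle is the spectral-gap estimate itself: it hinges on having $p_h''>0$ throughout the retained band, which is precisely why the hypothesis $\lambda<1/4$ is sharp—the inflection points of $p_h$ at $\pm(N+1)/4$ identified in Section \ref{sec.cons} are exactly those that produced the blow-up of Theorem \ref{explosion}, and the filter is designed to excise them.
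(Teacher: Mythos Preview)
Your proposal is correct and follows essentially the same route as the paper: reduce the $L^4$-norm to a sum over $k=n_1+n_2$ of squared exponential sums in $t$ and apply the direct Ingham inequality to each, the uniform gap coming from the strict convexity of $p_h$ on the filtered band $|n|\leq\lambda N$. The only cosmetic difference is that the paper derives the gap via the explicit product formula $\mu_h(n+1)-\mu_h(n)=8(N+1)^2\cos\bigl(\tfrac{\pi r}{N+1}\bigr)\sin\bigl(\tfrac{\pi(r-2n-1)}{N+1}\bigr)\sin\bigl(\tfrac{\pi}{N+1}\bigr)$ and splits the inner sum at $n=[r/2]$, whereas you reach the same conclusion through $p_h''(\xi)\geq 8\pi^2\cos(2\pi\lambda)>0$ and a mean-value argument after grouping by unordered pairs.
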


\begin{remark}
The same result holds if for some positive constant $\eps$ the
 initial datum $\bu^h(0)$ belongs to the set
$${\rm{Span}}\Big\{\varphi _n^h: \Big||n|-\frac N4\Big|\geq \epsilon N\Big\}.$$
On this set the symbol introduced by the finite difference scheme \eqref{conservative} is
uniformly convex, with a parameter which depends by $\eps$. Clearly, this uniform
convexity is lost as $\eps$ goes to zero.
\end{remark}

The main ingredient in the proof of the above result is the following \textit{direct}
Ingham's inequality.
\begin{lemma}
Let $(\mu_n)_{n\in \zz}$ be a sequence of real numbers and $\gamma >
0$ be such
\begin{equation}\label{gap}
\mu_{n+1}-\mu_n\geq \gamma>0,\, \forall n\in \zz.
\end{equation}

For any $T > 0$ there exists a positive constant $C = C(T, \gamma)
> 0$ such that, for any finite sequence $(a_n)_{n\in Z}$,
\begin{equation}\label{ingham}
    \int_{-T}^T\Big|\sum _{n}a_ne^{i\mu_n t}\Big|^2dt\leq
    C\sum_{n}|a_n|^2.
\end{equation}
\end{lemma}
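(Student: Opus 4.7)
\medskip
\noindent\emph{Proof plan.} The plan is to turn the integral into a bilinear form in $(a_n)$ by inserting a non-negative majorant of $\mathbf{1}_{[-T,T]}$, and then bound that form on $\ell^2$ via Schur's test; the gap hypothesis \eqref{gap} makes the resulting row sum converge.

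First I would choose an auxiliary function $K\colon\rr\to[0,\infty)$ which (i) satisfies $K(t)\geq 1$ for $|t|\leq T$, (ii) has compact support, and (iii) whose Fourier transform $\hat K$ decays at least like $1/|\xi|^{1+\delta}$ at infinity. The scaled triangular bump $K(t):=2\max(1-|t|/(2T),0)$, supported in $[-2T,2T]$, does the job: a routine computation gives the Fejér-type transform $\hat K(\xi)=4T\sin^2(T\xi)/(T\xi)^2$, so $\hat K(0)=4T$ and $|\hat K(\xi)|\leq 4/(T\xi^2)$ for $\xi\neq 0$.

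Setting $f(t):=\sum_n a_n e^{i\mu_n t}$, the finiteness of the sum lets me expand $|f|^2$ and interchange sum and integral, giving
\begin{align*}
\int_{-T}^T |f(t)|^2\,dt
&\leq \int_{\rr} K(t)\,|f(t)|^2\,dt \\
&= \sum_{n,m} a_n\,\overline{a_m}\,\hat K(\mu_m-\mu_n).
\end{align*}
The kernel $M_{n,m}:=\hat K(\mu_m-\mu_n)$ is Hermitian, so Schur's test bounds this bilinear form by $\bigl(\sup_n \sum_m |M_{n,m}|\bigr)\sum_n |a_n|^2$. Using the gap hypothesis $|\mu_m-\mu_n|\geq \gamma|m-n|$ together with the quadratic decay of $\hat K$,
\begin{equation*}
\sup_n \sum_m |M_{n,m}| \;\leq\; \hat K(0) + \sum_{k\neq 0}\frac{4}{T(\gamma k)^2}\;=\;C(T,\gamma)\;<\;\infty,
\end{equation*}
which delivers \eqref{ingham}.

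The only real obstacle lies in the construction of $K$: it must dominate $\mathbf{1}_{[-T,T]}$ \emph{and} have Fourier decay summable against the gap-separated shifts $(\mu_m-\mu_n)_m$. The $1/|\xi|$ decay supplied by $\mathbf{1}_{[-T,T]}$ itself is insufficient, since it leads to a logarithmically divergent sum $\sum_{k\neq 0} 1/|k|$; the extra order of decay obtained by passing to the triangular majorant (equivalently, a Fejér kernel) is precisely what the gap condition can absorb.
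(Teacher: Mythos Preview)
Your argument is correct: the triangular (Fej\'er-type) majorant $K$ does dominate $\mathbf{1}_{[-T,T]}$ and has the quadratic Fourier decay needed, and the gap hypothesis then makes the Schur sum converge. One small cosmetic point: the constant you obtain is explicitly $C(T,\gamma)=4T+\dfrac{4\pi^2}{3T\gamma^2}$, which you may as well display.

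There is nothing to compare against, however. The paper does not prove this lemma; it simply states it and refers the reader to Young~\cite{MR1836633} for background on Ingham-type inequalities. Your proof is in fact the standard one found in that reference (and essentially Ingham's original idea): replace the sharp cutoff by a smoother weight whose Fourier transform decays fast enough to be summed over the $\gamma$-separated frequencies. So your proposal is correct and follows the classical route the paper is implicitly invoking.
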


The Ingham's  inequalities have been successfully used to prove observability
inequalities in for many 1-D control problems. 
They generalize the classical Parseval
equality for orthogonal sequences. We refer to Young \cite{MR1836633} to a survey on the
theme.

\begin{proof}[Proof of Theorem \ref{t.fil}]
 Let us consider $\bu^h(0)\in {\rm{Span}}\{\varphi _n^h: |n|\leq \lambda
N\}$ as initial data in equation \eqref{conservative}:
$$\bu^h(0)=\sum _{|n|\leq \lambda N} \hat{u}_0(n) \bvarphi^h_n.$$
The solution of \eqref{conservative} is given by
$$ \bu^h(t)=\sum _{|n|\leq \lambda N}
\hat{u}_0(n)\exp(-it p_h(n))\bvarphi^h_n$$ where $p_h(n)= 4/{h^2}\sin^2 (n\pi h)$.

 With $q_h$ as in \eqref{def.pgrande}, identity  \eqref{psch500} applied to vector
 $\bu^h$ gives us
\begin{align*}
   \|\bu^h\| &^4_{L^4(0,T;\, L^4(\mt_h)}=
   \int _0^T \sum _
    {\begin{subarray}{c}
    |n_i|\leq \lambda N\\
    n_1+n_2=n_3+n_4
    \end{subarray}
    }
   \hat{u}_0(n_1) \hat{u}_0(n_2)\overline{\hat{u}}_0(n_3) \overline{\hat{u}}_0(n_4)
   e^{-itq_h(\bn)}
 \\
   &=\int _0^T \sum _{|r|\leq 2\lambda N }\sum _
    {\begin{subarray}{c}
     |n_i|\leq \lambda N\\
    n_1+n_2=r,\,n_3+n_4=r
    \end{subarray}
    }
   \hat{u}_0(n_1) \hat{u}_0(n_2)\overline{\hat{u}}_0(n_3) \overline{\hat{u}}_0(n_4)
   e^{-itq_h(\bn)}\\
    &=\sum _{|r|\leq 2\lambda N}\int _0^T\Big|\sum _
    {\begin{subarray}{c}
     |n_i|\leq \lambda N\\
    n_1+n_2=r
    \end{subarray}
    }\hat{u}_0(n_1) \hat{u}_0(n_2)e^{-it(p_h(n_1)+p_h(n_2))}\Big|^2dt.
     \end{align*}
It is sufficient to prove that for any $|r|\leq 2\lambda N$, $\Sigma(r)$ defined by
\begin{equation*}\label{psch82}
\Sigma (r)=\int _0^T\Big|\sum _
    {\begin{subarray}{c}
    |n_i|\leq \lambda N\\
    n_1+n_2=r
    \end{subarray}
    }\hat{u}_0(n_1) \hat{u}_0(n_2)e^{-it(p_h(n_1)+p_h(n_2))}\Big|^2dt
\end{equation*}
satisfies \begin{equation}\label{estimare.sigma} \Sigma (r)\leq
    \sum _
    {\begin{subarray}{c}
    |n_i|\leq \lambda N\\
    n_1+n_2=r
    \end{subarray}
    }|\hat{u}_0(n_1) |^2|\hat{u}_0(n_2)|^2.
\end{equation}

We consider the case $r\geq 0$ the other case being similar. We
write $\Sigma (r)$ as
\begin{equation}\label{psch83}
    \Sigma(r) = \int _0^T\Big|\sum _{r-\lambda N\leq n\leq \lambda N}
     \hat{u}_0(n) \hat{u}_0(r-n)e^{it\mu_h(n)}\Big|^2dt
\end{equation}
where $\mu_h(n)=-p_h(n)-p_h(r-n)$.

\begin{figure}
  \includegraphics[width=8cm]{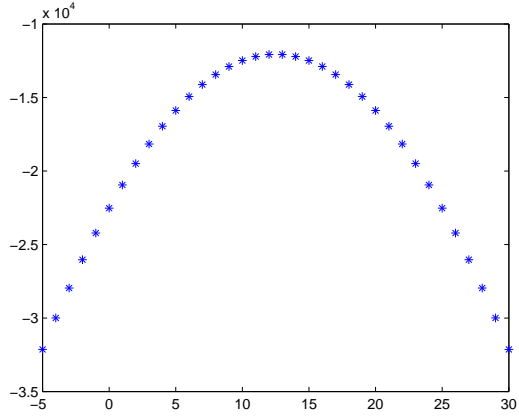}\\
  \caption{The sequence $\mu_h(n)$, $N=150$, $\lambda=1/5$, $r=N/6$, $n=r-\lambda N,\dots,\lambda N$}
  \label{fig3}
\end{figure}

In order to apply Ingham's inequality \eqref{ingham} we need to prove that the
 sequence $\mu_h(n)$  satisfies the gap condition (\ref{gap}). As we can
see in Figure \ref{fig3}, there are two range of $n$'s for which the gap condition
(\ref{gap}) is satisfied in each of them.

Elementary manipulations of trigonometric functions give us that
\begin{equation}\label{psch84}
  \mu_h(n+1)-\mu_h(n)=8(N+1)^2\cos\Big(\frac {\pi r}{N+1}\Big)\sin
\Big(\frac{\pi(r-2n-1)}{N+1}\Big)\sin \Big(\frac {\pi}{N+1}\Big).
\end{equation}
Thus the sequence $\mu_h(n)$ satisfies the following inequalities:
$$\mu_h(r-[\lambda N])< \mu_h(r-[\lambda n]+1)<\dots < \mu _h\Big(\Big[\frac r2\Big]-1\Big)
< \mu_h\Big(\Big[\frac r2\Big]\Big) $$ and
$$\mu_h\Big(\Big[\frac r2\Big]+1\Big)>\mu_h\Big(\Big[\frac r2\Big]+2\Big)>\dots> \mu_h(\lambda N),$$
where $[\cdot]$ represents the floor function.

By \eqref{psch84}, for any $n\leq [r/2]-1$ in the definition \eqref{psch83} of
$\Sigma(r)$ we obtain
$$\mu_h(n+1)-\mu_h(n)\geq 8(N+1)^2\cos(2\lambda \pi)\sin^2\Big(\frac \pi {N+1}\Big)\geq c_1(\lambda).$$
Also, for $n\geq [r/2]$ the following holds
$$\mu_h(n+1)-\mu_h(n)\leq -8(N+1)^2\cos(2\lambda \pi)\sin^2\Big(\frac \pi {N+1}\Big)\leq -c_2(\lambda).$$
This suggests us to split the right hand side of \eqref{psch83} as
$\Sigma(r)=\Sigma_1(r)+\Sigma_2(r)$ where
$$\Sigma_1(r)=\int _0^T\Big|\sum _{n=r-\lambda N}^{[r/2]}
     \hat{u}_0(n)\hat{u}_0({r-n})e^{it\mu_h(n)}\Big|^2dt$$
and
$$\Sigma_2(r)=\int _0^T\Big|\sum _{n=[r/2]+1}^{\lambda N}
     \hat{u}_0(n)\hat{u}_0({r-n})e^{it\mu_h(n)}\Big|^2dt.$$

For each of the above terms we apply Ingham's inequality and obtain:
$$\Sigma_1(r)\leq \sum _{r-\lambda N\leq n}^{[r/2]}|\hat{u}_0(n)|^2|\hat{u}_0(r-n)|^2$$
and
$$\Sigma_2(r)\leq \sum _{[r/2]+1}^{\lambda N}|\hat{u}_0(n)|^2|\hat{u}_0(r-n)|^2$$
The last two estimates show that \eqref{estimare.sigma} holds and
then \eqref{psch81}. The proof is now complete.
\end{proof}

\section{A viscous scheme}\label{vis}

In this section we will analyze the $L^4$-property for numerical schemes that contain
artificial numerical viscosity.

The scheme we will analyze is  as follows:
\begin{equation}\label{sch.diss}
    \left\{
\begin{array}{lc}
\displaystyle i\frac{du_j}{dt}
+\frac{u^h_{j+1}-2u^h_j+u^h_{j-1}}{h^2}=ia(h)\frac{u^h_{j+1}-2u^h_j+u^h_{j-1}}{h^2},
& j=0,\dots, N,\, t>0,\\
\\
u_0(t)=u_{N+1}(t),&t>0,\\
\end{array}
    \right.
\end{equation}
where by convention $u_{-1}=u_{N}$.

The main result of this section is given by the following theorem.
\begin{theorem}
Let  $a(h)$ be such that
\begin{equation}\label{conditie.a}
\inf_{h>0}\frac{a(h)}h>0.
\end{equation}
For any positive time $T$, there exists a positive constant $C(T)$ such that
\begin{equation}\label{l4l4}
\|\bu^h\|_{L^4(0,T;\,L^4(\mt_h))}\leq C(T)\|\bu^h(0)\|_{L^2(\mt _h)}
\end{equation} holds for all $\bu^h(0)\in L^2(\mt_h)$, uniformly in $h>0$.
\end{theorem}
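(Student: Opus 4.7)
The plan is to split the initial data by frequency and treat the two pieces by very different arguments. Fix a threshold $\lambda\in(0,1/4)$ so that Theorem \ref{t.fil} applies, and write $\bu^h(0)=\bv^h(0)+\mathbf{w}^h(0)$, where $\bv^h(0)$ retains the Fourier modes with $|n|\leq\lambda N$ and $\mathbf{w}^h(0)$ carries the complementary ones. By $L^2$-orthogonality both pieces are bounded in $L^2(\mt_h)$ by $\bu^h(0)$, so it suffices to estimate the corresponding solutions $\bv^h(t)$ and $\mathbf{w}^h(t)$ of \eqref{sch.diss} separately in $L^4(0,T;L^4(\mt_h))$.

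For the low-frequency piece, the key observation is that \eqref{sch.diss} reads $\bu^h_t=(a(h)+i)\Delta_h\bu^h$, so its flow factorises as
\[ e^{t(a(h)+i)\Delta_h}=H_{a(h)t}\,S_t,\]
where $H_s=e^{s\Delta_h}$ is the discrete heat semigroup and $S_t=e^{it\Delta_h}$ is the conservative propagator. The matrix of $\Delta_h$ has non-negative off-diagonal entries and annihilates constants, so $H_s$ has a non-negative kernel with $H_s\mathbf 1=\mathbf 1$; symmetry promotes $H_s$ to a doubly stochastic operator, which is therefore a contraction on every $L^p(\mt_h)$, $1\leq p\leq\infty$. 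All operators involved are functions of $\Delta_h$, hence commute with the projector $P_\lambda$ onto $\mathrm{Span}\{\bvarphi_n^h:|n|\leq\lambda N\}$; in particular $\bv^h(t)=H_{a(h)t}S_t\bv^h(0)$ and
\[\|\bv^h(t)\|_{L^4(\mt_h)}\leq\|S_t\bv^h(0)\|_{L^4(\mt_h)}.\]
Applying Theorem \ref{t.fil} to the conservative evolution of $\bv^h(0)$ yields $\|\bv^h\|_{L^4(0,T;L^4(\mt_h))}\leq c(\lambda)\|\bu^h(0)\|_{L^2(\mt_h)}$, uniformly in $h$.

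For the high-frequency piece the viscous dissipation does the work. Since $\sin^2(n\pi h)$ is increasing in $|n|$ on $\{|n|\leq N/2\}$, one has $p_h(n)\gtrsim h^{-2}$ on the range $\lambda N\leq |n|\leq N/2$, and combined with \eqref{conditie.a} this gives $a(h)p_h(n)\gtrsim h^{-1}$ there. Using the Fourier representation $\widehat{\mathbf{w}^h}(n,t)=\hat u_0(n)e^{-(a(h)+i)p_h(n)t}$ together with the elementary bounds $\|\mathbf v\|_{L^\infty(\mt_h)}\leq\sum_n|\hat v(n)|$ and $\|\mathbf v\|_{L^4(\mt_h)}^4\leq\|\mathbf v\|_{L^\infty(\mt_h)}^2\|\mathbf v\|_{L^2(\mt_h)}^2$, Cauchy--Schwarz in Fourier together with the $L^2$ contractivity of the flow give
\[\int_0^T\|\mathbf w^h(t)\|_{L^4(\mt_h)}^4\,dt\lesssim(N+1)\|\mathbf w^h(0)\|_{L^2(\mt_h)}^4\int_0^T e^{-ct/h}\,dt\lesssim(N+1)h\,\|\bu^h(0)\|_{L^2(\mt_h)}^4,\]
and the identity $(N+1)h=1$ closes the bound.

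Adding the two contributions gives \eqref{l4l4}. I expect the main conceptual step to be the semigroup factorisation $e^{t(a(h)+i)\Delta_h}=H_{a(h)t}S_t$ together with the Markov contractivity of $H_{a(h)t}$: this is what lets Theorem \ref{t.fil} take over on the low-frequency component with no modification. The high-frequency estimate is then a soft consequence of the hypothesis $a(h)\gtrsim h$, which forces the dissipation factor $e^{-a(h)p_h(n)t}$ to overwhelm the $\sqrt{N+1}$ coming from the crude $L^\infty\hookleftarrow\ell^1$ embedding.
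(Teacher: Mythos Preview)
Your argument is correct and, for the low–frequency component, it is genuinely different from the paper's route. The paper does \emph{not} invoke Theorem~\ref{t.fil} for the low modes; instead it redoes the bilinear expansion \eqref{psch500} with the extra damping factor $e^{-ta(h)(p_h(n_1)+p_h(n_2))}$ present, splits the resulting $\Sigma(r)$ into diagonal and off-diagonal parts, and controls the off-diagonal contribution by the explicit lower bound $|\mu_h(n,m)|\gtrsim |n-m|\,|r-n-m|$ together with the summability of $1/|n-m|^2$. Your factorisation $e^{t(a(h)+i)\Delta_h}=H_{a(h)t}S_t$ and the $L^p$-contractivity of the discrete heat semigroup (a Markov/doubly-stochastic argument) short-circuit all of this: the damping is simply removed pointwise in $t$, and Theorem~\ref{t.fil} applies verbatim to $S_t\bv^h(0)$. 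This is cleaner and more conceptual; the paper's computation, on the other hand, makes the gap structure of $\mu_h$ completely explicit and does not rely on positivity of the heat kernel.

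For the high frequencies the two arguments are essentially the same idea in different clothing. Both use $p_h(n)\gtrsim h^{-2}$ on the high modes and the hypothesis $a(h)\gtrsim h$ to produce a time-integrable factor of size $h$, balanced against a spatial factor of size $N+1$. The paper obtains the latter from the combinatorial count in \eqref{ineg.6}; you obtain it from the crude embedding $\|\cdot\|_{L^\infty}\leq\sum_n|\hat{\cdot}(n)|$ and Cauchy--Schwarz. Either way $(N+1)h=1$ closes the estimate.
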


\begin{remark}
We do not know whether  condition \eqref{conditie.a} on  $a(h)$ is necessary. It is an
open problem to determine the range of $\beta>1$, if there exists any, such that for
$a(h)=h^\beta$ the result of the above theorem still holds.
\end{remark}

\begin{proof}

Tacking the DFT in \eqref{sch.diss} we find that the Fourier coefficients of $\bu^h(t)$,
$\hat u(t,k)$, $k=-N/2,\dots,N/2$ solve the following ODE's:

$$\left\{\begin{array}{ll}
\displaystyle i\frac{d\hat u(t,k)}{dt}+\hat u(t,k)p_h(n)=ia(h)
 \hat u(t,k)p_h(n), \, t >0.\\
 \\
\hat{u}(0,k)=\hat u_0(k),
 \end{array}\right.$$
where $\hat u_0(k), k=-N/2,\dots,N/2$ are the Fourier coefficients of $\bu^h(0)$:
$$\bu^h(0)=\sum_{|n|\leq N/2}\hat u_0(n)\bvarphi_n^h.$$
Solving the above ODE's we obtain that $\bu^h$, the solution of \eqref{sch.diss} is as
follows
$$\bu^h(t)=\sum _{k=-N/2}^{N/2} \hat u_0(k)\exp
\left(-\frac {4it\sin^2(k\pi h)}{h^2}-a(h)\frac{4t\sin^2(k\pi h)}{h^2} \right)
\bvarphi_k^h.$$

We split  $\bu^h$ in two components $\bu^h(t)=\bu^h_{low}(t)+\bu^h_{high}(t),$
corresponding to a low:
\begin{equation}\label{ulow}
\bu^h_{low}(t)=\sum _{|k|\leq N/8} \hat u_0(k)\exp\left(-itp_h(n)-a(h)tp_h(n) \right)
\bvarphi_k^h,
\end{equation}
 respectively high frequencies component of $\bu^h$:
\begin{equation}\label{uhigh}
\bu^h_{high}(t)=\sum _{N/8<|k|\leq N/2} \hat u_0(k)\exp\left(-itp_h(n)-a(h)tp_h(n)
\right) \bvarphi_k^h.
\end{equation}
The choice of $N/8$ as the range of low frequencies is motivated by
the fact that there are solutions concentrated near the point $N/4$
of the spectrum that generate the blow-up of the $L^4(0,T;\,L^4(\mt
_h))$-norm as we have seen in Section \ref{sec.cons}.

 In the
following we will prove that the two components $\bu^h_{low}$ and $\bu^h_{low}$  of
$\bu^h$  satisfy:
$$\|\bu^h_{low}\|_{L^4(0,T;\,L^4(\mt_h))}\lesssim \|\bu^h_{low}(0)\|_{L^2(\mt_h)}$$
and
$$\|\bu^h_{high}\|_{L^4(0,T;\,L^4(\mt_h))}\lesssim \|\bu^h_{high}(0)\|_{L^2(\mt_h)}.$$

 {\bf Step I. Estimate of $\bu_{high}^h$.} Using identity  \eqref{psch500} 
we obtain that $\bu^h_{high}$ satisfies the rough estimate
\begin{align*}
  \|\bu_{high}^h\|^4_{L^4(0,T;\,L^4(\mt_h))} &\leq   \sum
    _{\begin{subarray}{c}
N/8< |n_i|\leq N/2 \\
     n_1+n_2=n_3+n_4
     \end{subarray}
     }|\hat{u}_0(n_1)
     \hat{u}_0(n_2)\hat{u}_0(n_3)\hat{u}_0(n_4)|\int _0^T e^{-ta(h)\sigma_h(\bf n)}dt\\
   &\leq \sum
    _{\begin{subarray}{c}
N/8< |n_i|\leq N/2 \\
     n_1+n_2=n_3+n_4
     \end{subarray}
     } \frac {|\hat{u}_0(n_1)
     \hat{u}_0(n_2)\hat{u}_0(n_3)\hat{u}_0(n_4)|}{a(h)\sigma_h(\bf
     n)}.
\end{align*}
where $\sigma_h$ is given by
 $$\sigma_h(\bn)=p_h(n_1)+p_h(n_2)+p_h(n_3)+p_h(n_4), \, \bn=(n_1,n_2,n_3,n_4).$$

Observe that for all $\textbf{n}=(n_1,n_2,n_3,n_4)$ with $N/8<n_i\leq N/2 $,
$\sigma_h(\bn)$ can be bounded from below as follows:
$$\sigma_h(\bn)\geq {16}(N+1)^2\sin^2 \Big(\frac{N\pi}{8(N+1)}\Big)\geq c_1(N+1)^2.$$
Thus, in the above inequality we replace $\sigma_h(\bn)$ by $(N+1)^2$ and we obtain
\begin{equation}\label{ineg.5}
   \|\bu_{high}^h\|^4_{L^4(0,T;\,L^4(\mt_h))} \lesssim \frac {h^2}{a(h)}\sum
    _{\begin{subarray}{c}
N/8< |n_i|\leq N/2 \\
     n_1+n_2=n_3+n_4
     \end{subarray}
     }|\hat{u}_0(n_1)
     \hat{u}_0(n_2)\hat{u}_0(n_3)\hat{u}_0(n_4)|.
\end{equation}
The right hand side sum satisfies:
\begin{align}
\nonumber\sum
    _{\begin{subarray}{c}
N/8< |n_i|\leq N/2 \\
     n_1+n_2=n_3+n_4
     \end{subarray}
     }    |\hat{u}_0(n_1)
    \hat{u}_0(n_2)&\hat{u}_0(n_3)\hat{u}_0(n_4)|
  \leq\sum _{N/4< r\leq N } \Big(\sum
    _{\begin{subarray}{c}
N/8< |n_i|\leq N/2 \\
     n_1+n_2=r
     \end{subarray}
     }|\hat{u}_0(n_1) \hat{u}_0(n_2)|\Big)^2\\
    \nonumber &\lesssim \sum _{N/4< r\leq N } (r+1)\sum
    _{\begin{subarray}{c}
N/8< |n_i|\leq N/2 \\
     n_1+n_2=r
     \end{subarray}
     }|\hat{u}_0(n_1) \hat{u}_0(n_2)|^2
  \label{ineg.6} \\
  &  \lesssim (N+1) \Big(\sum_{N/8\leq n\leq N/2} |\hat{u}_0(n)|^2
     \Big)^2.
\end{align}
Estimates (\ref{ineg.5}) and (\ref{ineg.6}) imply that
$$\|\bu_{high}^h\|^4_{L^4(0,T;\,L^4(\mt_h))} \lesssim
\frac h{a(h)}{\|\bu_{high}^h(0)\|_{L^2(\mt_h)}^4}.$$ In view of assumption
\eqref{conditie.a} on $a(h)$ we obtain that the high frequencies component of $\bu^h$,
$\bu^h_{high}$, satisfies the following estimate:
$$\|\bu_{high}^h\|_{L^4(0,T;\,L^4(\mt_h))} \lesssim \|\bu_{high}^h(0)\|_{L^2(\mt_h)}.$$

{\bf Step II. Estimate of $\bu_{low}^h$.} Using the same ideas as above the low
frequencies component $\bu_{low}^h$ satisfies:
\begin{align*}
   \|\bu_{low}^h\|^4_{L^4(0,T;\,L^4(\mt_h))} &=
   \int _0^T \sum _
    {\begin{subarray}{c}
    |n_i|\leq N/8\\
    n_1+n_2=n_3+n_4
    \end{subarray}
    }
   \hat{u}_0(n_1)
     \hat{u}_0(n_2)\overline{\hat{u}}_0(n_3)\overline{\hat{u}}_0(n_4)
   e^{-itq_h(\bn)}e^{-ta(h)\sigma_h(\bn)}dt\\
   &=\int _0^T\sum _{|r|\leq N/4}\sum _
    {\begin{subarray}{c}
    |n_i|\leq N/8\\
    n_1+n_2=n_3+n_4=r
    \end{subarray}
    }
   \hat{u}_0(n_1)
     \hat{u}_0(n_2)\overline{\hat{u}}_0(n_3)\overline{\hat{u}}_0(n_4)
   e^{-itq_h(\bn)}e^{-ta(h)\sigma_h(\bn)}dt\\
    &=\int _0^T \sum _{|r|\leq N/4}\Big|\sum _
    {\begin{subarray}{c}
    |n_i|\leq N/8\\
    n_1+n_2=r
    \end{subarray}
    }\hat{u}_0(n_1)
     \hat{u}_0(n_2)e^{-it(p_h(n_1)+p_h(n_2))}e^{-ta(h)(p_h(n_1)+p_h(n_2))}\Big|^2dt.
\end{align*}

It is sufficient to prove the existence of a positive constant $c$, independent of
$|r|\leq N/4$ and $N$, such that $\Sigma(r)$ defined by
\begin{equation}\label{psch92}
\Sigma(r) =\int _0^T\Big|\sum _
    {\begin{subarray}{c}
    |n_i|\leq N/8\\
    n_1+n_2=r
    \end{subarray}
    }\hat{u}_0(n_1)
     \hat{u}_0(n_2)e^{-it(p_h(n_1)+p_h(n_2))}e^{-ta(h)(p_h(n_1)+p_h(n_2))}\Big|^2dt,
\end{equation}
satisfies  the following inequality:
\begin{equation}\label{est.puta}
\Sigma(r)
    \leq
    c
\sum _
    {\begin{subarray}{c}
    |n_i|\leq N/8\\
    n_1+n_2=r
    \end{subarray}} |\hat{u}_0(n_1)|^2|
     \hat{u}_0(n_2)|^2.
\end{equation}

We consider the case $r\geq 0$, the other case cans be treated in a similar manner. The
conditions imposed on $r,n_1,n_2$  in  definition \eqref{psch92} of  $\Sigma(r)$ give us
that $n_1$ satisfies:
$$-\frac {N}8\leq r-\frac{N}8\leq n_1\leq \frac{N}8\leq r+\frac {N}8. $$
This allows us to rewrite  $\Sigma(r)$   in the following form
$$\Sigma(r)=\int _0^T\Big|\sum _
    {n=\,r-N/8}^{ N/8}\hat{u}_0(n)
     \hat{u}_0(r-n)e^{-it(p_h(n)+p_h(r-n))}e^{-ta(h)(p_h(n)+p_h(r-n))}\Big|^2dt.$$
Using that  $r-{N}/8\leq  r/2\leq  {N}/8$  we write $\Sigma(r)\leq
2(\Sigma_1(r)+\Sigma_2(r))$ where
$$\Sigma_1(r)=\int _0^T\Big|\sum _
    {n=\,r-N/8}^{ [r/2]}\hat{u}_0(n)
     \hat{u}_0(r-n)e^{-it(p_h(n)+p_h(r-n))}e^{-ta(h)(p_h(n)+p_h(r-n))}\Big|^2dt$$
and
$$\Sigma_2(r)=\int _0^T\Big|\sum _
    {n=[r/2]+1}^{ N/8}\hat{u}_0(n)
     \hat{u}_0(r-n)e^{-it(p_h(n)+p_h(r-n))}e^{-ta(h)(p_h(n)+p_h(r-n))}\Big|^2dt.$$

In the following  we  prove that
$$\Sigma_1(r)\lesssim C(T)\sum _
    {n=r-N/8}^{ [r/2]}|\hat{u}_0(n)|^2 |\hat{u}_0(r-n)|^2.$$
In a similar manner $\Sigma_2(r)$ will satisfy
$$\Sigma_2(r)\lesssim C(T) \sum _
    {n=[r/2]+1}^{ N/8}|\hat{u}_0(n)|^2 |\hat{u}_0(r-n)|^2$$
and thus \eqref{est.puta} holds, which finishes the proof.

With the notations $\mu_h(n,m)=q_h(n,r-n,m,r-m)$ and $\nu_h(n,m)=\sigma_h(n,r-n,m,r-m)$
we get
\begin{eqnarray*}
  \Sigma_1(r) &\leq &\sum _{n=\,r-N/8}^{ [r/2]}|\hat{u}_0(n)|^2
   |\hat{u}_0(r-n)|^2\int _0^Te^{-ta(h)\nu_h(n,m)}dt+S_1(r)\\
    &\leq &  T\sum _
    {n=r-N/8}^{ [r/2]}|\hat{u}_0(n)|^2 |\hat{u}_0(r-n)|^2+S_1(r).
\end{eqnarray*}
where $$S_1(r)=\sum _{\begin{subarray}{c}
    n,m=r- N/8\\
    n\neq m
    \end{subarray}
    }^{ [r/2]}|\hat{u}_0(n)
     \hat{u}_0(r-n) {\hat{u}}_0(m){\hat{u}}_0(r-m)|
  \left|  \int _0^T e^{-it\mu_h(n,m)}e^{-ta(h)\nu_h(n,m)}dt\right|$$
It is sufficient  to prove that
\begin{equation}\label{est.S1}
S_1(r)\lesssim\sum _
    {n=r-N/8}^{ [r/2]}|\hat{u}_0(n)|^2 |\hat{u}_0(r-n)|^2.
\end{equation}
Observe that  $S_1$  satisfies the following  estimate :
\begin{align*}
    S_1(r) &\leq 2 \sum _{\begin{subarray}{c}
    n,m=r- N/8\\
    n\neq m
    \end{subarray}
    }^{ [r/2]}
    \frac{ |\hat{u}_0(n)
     \hat{u}_0(r-n){\hat{u}}_0(m)
     {\hat{u}}_0(r-m)|}{|i\mu_h(n,m)+a(h)\nu_h(n,m)|}
     \\
     &= \sum _
    {n=r-N/8}^{ [r/2]}|\hat{u}_0(n)|^2 |\hat{u}_0(r-n)|^2 \sum _{\begin{subarray}{c}
    m=r- N/8\\
    m\neq n
    \end{subarray}
    }^{ [r/2]}\frac 1{|\mu_h(m,n)|}\\
    &\qquad+\sum _
    {m=r-N/8}^{ [r/2]}|\hat{u}_0(m)|^2 |\hat{u}_0(r-m)|^2 \sum _{\begin{subarray}{c}
    n=r- N/8\\
    n\neq m
    \end{subarray}
    }^{ [r/2]}\frac 1{|\mu_h(m,n)|}.
\end{align*}
We prove the existence of a positive constant $C$, independent of $N$ and $r$ such that
the following
$$l(n)=\sum _{\begin{subarray}{c}
    m=r- N/8\\
    m\neq n
    \end{subarray}
    }^{ [r/2]}\frac 1{|\mu_h(m,n)|}\leq C$$
holds for all $n=r-N/8,\dots,[r/2]$. Thus  \eqref{est.S1}  holds and the proof is
finishes.

 We claim the existence of a positive constant $c$
such that $\mu_h$ verifies
\begin{equation}\label{est.p}
|\mu_h(n,m)|\geq c|n-m||r-m-n|
\end{equation}
 for all $n,m\in I(r,N)=\{r-N/8,\dots,[ r/2]\}$.
 Thus for a fixed $n$, $l(n)$ satisfies
\begin{align*}
 l(n)
\lesssim \sum _{\begin{subarray}{c}
    m=r- N/8\\
    m\neq n
    \end{subarray}
    }^{ [r/2]} \frac 1{|n-m|^2}+\sum _{\begin{subarray}{c}
    m=r- N/8\\
    m\neq n
    \end{subarray}
    }^{ [r/2]} \frac 1{|r-m-n|^2} \lesssim  \sum _{m\geq 1}\frac 1{m^2}\leq C.
\end{align*}

It remains to  prove \eqref{est.p}. Explicit computations show that
$$\mu_h(n,m)=8(N+1)^2\cos\Big(\frac{r\pi}{N+1}\Big)\sin\Big(\frac{(r-m-n)\pi}{N+1}
\Big)\sin\Big(\frac{(n-m)\pi}{N+1}\Big).$$ Taking into account that $r\leq N/4$ we get
$|\cos(r\pi/(N+1))|\geq \cos(\pi/4) $. This was the key point in choosing the low
frequency component up to $N/8$. If we would have chosen all the
 frequencies up to $N/4$ we will obtain  that
$r\leq N/2$ and then the term $\cos(r\pi/N)$ could not be bounded from bellow by a
positive constant. Using that $n$ and $m$ belong to $I(r,N)$ and  that $r$ is nonnegative
we have $|n|\leq N/8$ and $|m|\leq N/8$. Thus $|n-m|\leq N/4 $ and
$$\Big|\sin\Big(\frac{(n-m)\pi}{N+1}\Big)\Big|\gtrsim \frac{|n-m|}{N+1} . $$
Also $r\leq N/4$ implies that  $|r-m-n|\pi/(N+1)\leq \pi/2$ and
$$\Big|\sin\Big(\frac{(r-m-n)\pi}{N+1}
\Big)\Big|\gtrsim \frac{|r-n-m|}{N+1}.$$ The last  two inequalities
prove \eqref{est.p}.

The proof is now complete.
\end{proof}

%


\end{document}